\newcommand{\affiliation}{\footnote}
\setlist{itemsep=0ex,topsep=0ex,parsep=0.4ex}
\definecolor{blue}{RGB}{0,70,140}
\definecolor{green}{RGB}{100,140,0}
\definecolor{red}{RGB}{190,10,50}
\colorlet{structurecolor}{red}
\colorlet{linkcolor}{blue}
\colorlet{citecolor}{green}
\renewcommand*{\backref}[1]{}
\renewcommand*{\backrefalt}[4]{
	\ifcase #1 Not cited.%
	\or $\uparrow$#2%
	\else $\uparrow$#2%
	\fi%
}
\declaretheorem[name=Definition,numberwithin=section,style=definition]{definition}
\declaretheorem[name=Theorem,numberlike=definition,style=plain]{theorem}
\declaretheorem[name=Proposition,numberlike=theorem,style=plain]{proposition}
\declaretheorem[name=Lemma,numberlike=theorem,style=plain]{lemma}
\declaretheorem[name=Conjecture,numberlike=theorem,style=plain]{conjecture}
\newcommand{\defn}[1]{\textcolor{structurecolor}{\emph{#1}}}
\let\emptyset\varnothing
\renewcommand{\l}{\mathopen{}\mathclose\bgroup\left}
\renewcommand{\r}{\aftergroup\egroup\right}
\newcommand{\st}{\ifnum\currentgrouptype=16 \mathrel{}\middle|\mathrel{}\else\mathrel{|}\fi}
\DeclarePairedDelimiter{\set}{\{}{\}}
\DeclarePairedDelimiter{\abs}{\lvert}{\rvert}
\DeclarePairedDelimiter{\floor}{\lfloor}{\rfloor}
\DeclarePairedDelimiter{\ceil}{\lceil}{\rceil}
\DeclarePairedDelimiter{\inprod}{\langle}{\rangle}
\newcommand{\subs}{\subseteq}
\newcommand{\sm}{\setminus}
\newcommand{\eps}{\varepsilon}
\newcommand{\generate}[4]{%
  \def\@tempa{#1} 
  \count@=`#3
  \loop
  \begingroup\lccode`?=\count@
  \lowercase{\endgroup\@namedef{\@tempa ?}{#2{?}}}%
  \ifnum\count@<`#4
  \advance\count@\@ne
  \repeat
}
\newcommand{\DeclareMath}[2]{\newcommand{#1}{\mathnormal{#2}}}
\DeclareMath{\N}{\mathbb{N}}
\DeclareMath{\Z}{\mathbb{Z}}
\DeclareMath{\F}{\mathbb{F}}
\DeclareMathOperator{\rank}{rank}
\title{Circuit decompositions of binary matroids}
\author{Bryce Frederickson\affiliation{Department of Mathematics, Emory University, Atlanta, Georgia, 30322 (\textsf{\href{mailto:bfrede4@emory.edu}{bfrede4@emory.edu}}).} \and Lukas Michel\affiliation{Mathematical Institute, University of Oxford, United Kingdom (\textsf{\href{mailto:michel@maths.ox.ac.uk}{michel@maths.ox.ac.uk}}).}}
\date{24 June 2023}
\begin{document}
\maketitle

\begin{abstract}
    Given a simple Eulerian binary matroid $M$, what is the minimum number of disjoint circuits necessary to decompose $M$? We prove that $\abs{M} / (\rank(M) + 1)$ many circuits suffice if $M = \mathbb F_2^n \setminus \{0\}$ is the complete binary matroid, for certain values of $n$, and that $\cO(2^{\rank(M)} / (\rank(M) + 1))$ many circuits suffice for general $M$. We also determine the asymptotic behaviour of the minimum number of circuits in an odd-cover of $M$.
\end{abstract}

\section{Introduction}

Erd\H{o}s and Gallai conjectured that the edge set of any graph on $n$ vertices can be decomposed into $\cO(n)$ edge-disjoint cycles and edges \cite{erdHos1983some}. Equivalently, this says that any Eulerian graph can be decomposed into $\cO(n)$ edge-disjoint cycles. Despite receiving a lot of attention, the Erd\H{o}s-Gallai Conjecture remains a major open problem in the area of graph decompositions. While a straightforward greedy argument that iteratively removes largest cycles yields a decomposition of size $\cO(n \log n)$, it was only in 2014 that Conlon, Fox, and Sudakov \cite{conlon2014cycle} improved this upper bound to $\cO(n \log \log n)$. More recently, Buci\'c and Montgomery \cite{bucic2022towards} showed that $\cO(n \log^\star n)$ cycles suffice, where $\log^\star n$ is the iterated logarithm function.

Due to the difficulty of this problem, many variations of it have been considered. For example, if cycles can share edges, Fan proved that $\floor{(n-1)/2}$ cycles suffice to cover the edges of any Eulerian graph \cite{fan2003covers}. In fact, the cover can be chosen so that every edge is covered an odd number of times. In a similar vein, Pyber proved that any graph can be covered by $n-1$ cycles and edges \cite{pyber1985erdHos}.

In this note, we consider a matroid analogue of the cycle decomposition question: what is the minimum number of disjoint circuits necessary to decompose a matroid? We focus on (simple) matroids representable over the finite field $\F_2$. Up to isomorphism, such matroids are equivalent to \defn{(simple) binary matroids}, which are subsets $M \subs \F_2^n \sm \set{0}$ for some $n \ge 1$. In this setting, $M$ is \defn{Eulerian} if $\sum_{x \in M} x = 0$, and a subset $N \subs M$ is a \defn{circuit} if $N$ is a minimal non-empty Eulerian subset of $M$ with respect to inclusion.

We want to construct a circuit decomposition of $M$, that is, a small collection of disjoint circuits whose union is $M$. Observe that $M$ admits a circuit decomposition if and only if $M$ is Eulerian. In this case, we denote by $c(M)$ the minimum number of circuits in such a decomposition.

To obtain a lower bound on $c(M)$, note that every proper subset of a circuit in $M$ is linearly independent. For any binary matroid $M$, the \defn{rank} of $M$, denoted by $\rank(M)$, is the size of a largest linearly independent subset of $M$. Thus, any circuit in $M$ can have size at most $\rank(M) + 1$, which implies that $c(M) \ge \abs{M} / (\rank(M) + 1)$. For an Eulerian binary matroid $M$ of size $\Theta(2^{\rank(M)})$, this lower bound gives $c(M) \ge \Theta(2^{\rank(M)} / (\rank(M) + 1))$. We prove a matching upper bound.

\begin{theorem}\label{thm:decompqrdivr}
    For every Eulerian binary matroid $M \subs \F_2^n \sm \set{0}$ it holds that
    \begin{equation*}
        c(M) = \cO\l(\frac{2^{\rank(M)}}{\rank(M) + 1}\r).
    \end{equation*}
\end{theorem}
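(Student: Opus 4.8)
The plan is to construct the decomposition greedily in two phases: first peel off long circuits, then mop up with a crude bound. Write $r = \rank(M)$; after replacing $\F_2^n$ by $\operatorname{span}(M)$ we may assume $M \subs \F_2^r \sm \set{0}$ with $\rank(M) = r$, and since $c(M) \le \abs{M}/3$ always (iteratively delete any circuit — a nonempty Eulerian matroid contains one, of size at least $3$, and deletion preserves being Eulerian), it suffices to handle $r$ above any fixed constant, bounded $r$ being trivial. The engine is the following structural claim, proved below: \emph{every Eulerian $N \subs \F_2^n \sm \set{0}$ with $s = \rank(N)$ and $\abs{N} \ge 2^s/s^2$ contains a circuit of size $\Omega(s)$}. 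Granting it, run this loop starting from $N = M$: while $\abs{N} \ge 2^r/r^2$, observe that $s := \rank(N)$ satisfies $2^s > \abs{N} \ge 2^r/r^2$, hence $s \ge r - 2\log_2 r \ge 3$, and also $\abs{N} \ge 2^r/r^2 \ge 2^s/s^2$ since $s \le r$ and $x \mapsto 2^x/x^2$ increases for $x \ge 3$; so the claim yields a circuit $C \subs N$ with $\abs{C} = \Omega(s) = \Omega(r)$, which we delete, leaving $N \sm C$ Eulerian. Each pass removes $\Omega(r)$ elements, so the loop runs $\cO(2^r/r)$ times; it terminates with an Eulerian matroid of size below $2^r/r^2$, decomposed by the crude bound into a further $\cO(2^r/r^2)$ circuits. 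In total $c(M) = \cO(2^r/r) + \cO(2^r/r^2) = \cO(2^r/(r+1))$.

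To prove the structural claim — the substantive step — fix a basis $b_1, \dots, b_s \in N$ of $\operatorname{span}(N)$, which exists since $\rank(N) = s$, and for $S \subs [s]$ set $v_S = \sum_{i \in S} b_i$, so $\set{v_S : S \subs [s]}$ enumerates $\operatorname{span}(N)$. The key point is that whenever $\abs{S} \ge 2$ and $v_S \in N$, the set $\set{b_i : i \in S} \cup \set{v_S}$ is a circuit of $N$ of size $\abs{S} + 1$: it is a nonempty Eulerian subset of $N$ (it sums to $0$, and $v_S \notin \set{b_i : i \in S}$ because the $b_i$ are independent and $\abs{S} \ge 2$), and any proper nonempty Eulerian subset of it would produce a nontrivial linear dependence among the independent vectors $b_i$, $i \in S$. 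So it suffices to find an $S$ with $\abs{S}$ large and $v_S \in N$. Put $m = \lceil s/10 \rceil$; the number of subsets of size less than $m$ is $\sum_{k < m}\binom{s}{k} \le (es/m)^m$, and a routine estimate shows this is less than $2^s/s^2 \le \abs{N}$ once $s$ is large. Therefore the two subsets $N$ and $\set{v_S : \abs{S} \ge m}$ of $\operatorname{span}(N) \sm \set{0}$ have sizes summing to more than $2^s - 1$, so they intersect; the resulting $S$ has $\abs{S} \ge m \ge 2$ and $v_S \in N$, producing a circuit of size $\abs{S} + 1 \ge m + 1 = \Omega(s)$, as claimed.

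The main obstacle I expect is not a single inequality but the bookkeeping around the rank: deleting a circuit can lower $\rank(N)$, so the first-phase threshold must be tuned so that, while it holds, the current rank $s$ stays within $\cO(\log r)$ of $r$ and $\abs{N}$ remains at least $2^s/s^2$, ensuring the extracted circuits have length $\Omega(r)$ rather than $\Omega(s)$ for a degrading $s$; the threshold $2^r/r^2$ is chosen precisely for this. The only other point requiring care is fixing the constant in $m = \Theta(s)$ so that simultaneously $(es/m)^m < 2^s/s^2$ and $m \ge 2$, which any sufficiently small constant achieves for large $s$, the bounded ranks being absorbed into the $\cO(\cdot)$ via the crude bound.
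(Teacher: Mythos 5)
Your proposal is correct and follows essentially the same route as the paper: the structural claim (dense Eulerian matroids contain circuits of length $\Omega(\rank)$) is the contrapositive of the paper's Lemma~\ref{lem:circuitsizeoverq}, proved via the identical basis-plus-completion circuit construction, with your $(es/m)^m$ estimate playing the role of the paper's entropy bound (\cref{lem:generalentropybound}); the two-phase greedy peeling with a density threshold and a crude $\abs{N}/3$ mop-up matches \cref{thm:decompMdivroverq} and the paper's deduction of \cref{thm:decompqrdivr}. The only cosmetic difference is that you track the rank drop explicitly via $s \ge r - 2\log_2 r$, where the paper sidesteps this by bounding $\binom{\rank(N)}{i} \le \binom{r}{i}$.
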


In fact, for certain values of $n$, we show that $M = \mathbb F_2^n \sm \set{0}$, which we call the \defn{complete binary matroid} of dimension $n$,\footnote{This is equivalent to the projective geometry $PG(n-1,2)$ in the literature \cite{oxley2006matroid}.} can be decomposed into exactly $\abs{M} / (\rank(M) + 1)$ many circuits, where $\abs{M} = 2^n - 1$ and $\rank(M) = n$.

\begin{theorem}\label{thm:decompcomplete}
    Let $p$ be an odd prime for which the multiplicative order of $2$ modulo $p$ is $p-1$, and let $M \subs \F_2^{p-1} \sm \set{0}$ be the complete binary matroid of dimension $p-1$. Then
    \begin{equation*}
        c(M) = \frac{2^{p-1}-1}{p}.
    \end{equation*}
\end{theorem}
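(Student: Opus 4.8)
The lower bound $c(M) \ge \abs{M}/(\rank(M)+1) = (2^{p-1}-1)/p$ is exactly the general bound noted above, so the task is to exhibit a decomposition of $M$ into $(2^{p-1}-1)/p$ circuits; since $\abs{M} = 2^{p-1}-1$ and $\rank(M) = p-1$, each circuit in such a decomposition must then have size exactly $p = \rank(M)+1$. The plan is to use the algebraic structure forced by the hypothesis on $p$. Since $2$ has multiplicative order $p-1$ modulo $p$, the $p$-th cyclotomic polynomial $\Phi_p(x) = 1 + x + \dots + x^{p-1}$ is irreducible over $\F_2$, so $K \coloneqq \F_2[x]/(\Phi_p(x))$ is a field of order $2^{p-1}$. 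I would identify $\F_2^{p-1}$ with $K$ as $\F_2$-vector spaces, so that $M = \F_2^{p-1}\sm\set 0$ becomes the multiplicative group $K^\times$. Let $\omega \in K$ be the image of $x$; then $\omega^p = 1$ and $\omega \ne 1$, so $\omega$ generates a subgroup $H = \set{1, \omega, \dots, \omega^{p-1}}$ of $K^\times$ of order $p$ (consistent with $p \mid 2^{p-1}-1$ by Lagrange, or Fermat).

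First I would check that $H$ itself is a circuit. It is Eulerian because $\sum_{i=0}^{p-1}\omega^i = \Phi_p(\omega) = 0$, and it has size $p$; so, by the characterisation of circuits noted in the introduction, it remains to verify that every $(p-1)$-element subset of $H$ is linearly independent over $\F_2$, equivalently a basis of $K$. Omitting $\omega^{p-1}$ leaves $\set{1, \omega, \dots, \omega^{p-2}}$, which is precisely the standard basis of $K = \F_2[x]/(\Phi_p(x))$. For a general index $j$, the set $H \sm \set{\omega^j}$ is the image of $\set{1,\omega,\dots,\omega^{p-2}}$ under multiplication by the unit $\omega^{j+1}$, which is an invertible $\F_2$-linear map, hence it is again a basis.

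Finally I would run the same argument coset by coset. The cosets of $H$ partition $K^\times = M$ into $(2^{p-1}-1)/p$ sets of size $p$, and each coset $aH$ (for $a \in K^\times$) is the image of $H$ under multiplication by $a$, once more an invertible $\F_2$-linear map; hence $aH$ is Eulerian and has all its $(p-1)$-subsets independent, so it is a circuit. This yields a circuit decomposition of $M$ of size $(2^{p-1}-1)/p$, matching the lower bound and proving the theorem. I do not anticipate a serious obstacle once the field identification is set up: the only step requiring care is the linear independence of the $(p-1)$-subsets of $H$, which is exactly where the hypothesis $\operatorname{ord}_p(2) = p-1$ (equivalently, irreducibility of $\Phi_p$ over $\F_2$) is used, together with the reduction of every such subset to the standard-basis case by multiplying by a suitable unit.
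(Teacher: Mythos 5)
Your proposal is correct, and while it produces the same decomposition as the paper in disguise, the packaging and the key verification step are genuinely different. The paper works inside the even-weight hyperplane $N \subs \F_2^p$ and takes the circuits to be the orbits of the cyclic coordinate-shift action of $\Z_p$; under the isomorphism $\F_2[t]/(t^p-1) \cong \F_2 \times \F_2[t]/(\Phi_p(t))$ these orbits correspond exactly to your cosets $aH$ of the order-$p$ subgroup $H = \langle \omega\rangle$ in $K^\times$. Both arguments hinge on the same algebraic input, the irreducibility of $\Phi_p$ over $\F_2$ when $\operatorname{ord}_p(2)=p-1$ (which the paper proves via Frobenius conjugates and you may quote as standard). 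Where you diverge is in certifying that each orbit/coset is a circuit: the paper writes the orbit as $\set{f(\phi_1)(e_j)}_j$ and computes $\ker f(\phi_1)$ via a gcd/B\'ezout argument with $f(t)$ and $t^p-1$, whereas you observe that $H \sm \set{\omega^j} = \omega^{j+1}\cdot\set{1,\omega,\dots,\omega^{p-2}}$ is a unit multiple of the power basis of $K$, hence a basis, and that multiplication by any $a \in K^\times$ transports this to every coset. Your route buys a shorter and more transparent verification (no kernel computation, and the Eulerian property of each coset is immediate from $\F_2$-linearity of multiplication by $a$); the paper's formulation keeps everything inside the concrete coordinate model $\F_2^p$ and makes visible exactly where the construction breaks when $\Phi_p$ is reducible (e.g.\ $p=7$), since there the orbit map $f(\phi_1)$ can acquire a larger kernel. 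Both are complete proofs; yours could be substituted essentially verbatim.
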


For arbitrary Eulerian binary matroids, we prove the following upper bound on the size of a circuit decomposition.

\begin{theorem}\label{thm:decompmlogrdivroverq}
    For every Eulerian binary matroid $M \subs \F_2^n \sm \set{0}$ it holds that
    \begin{equation*}
        c(M) \le (1 + o(1)) \frac{\abs{M} \log(\rank(M))}{\log \abs{M}} \quad \text{ as } \quad |M| \to \infty.
    \end{equation*}
\end{theorem}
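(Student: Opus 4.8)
The plan is to decompose $M$ by greedily extracting circuits, using a structural lemma that forces every extracted circuit to be reasonably large. The lemma I would isolate is the following: \emph{if a binary matroid $N \subs \F_2^n \sm \set{0}$ of rank $r$ has all its circuits of size at most $s$, where $s \ge 2$, then $\abs{N} \le \sum_{i=0}^{s-1}\binom{r}{i} \le (r+1)^{s-1}$.} To prove this, fix a basis $B \subs N$ and, for each $x \in N$, let $S_x \subs B$ be the unique set with $x = \sum_{b \in S_x} b$; then $x \mapsto S_x$ is injective. If $x \in B$ then $\abs{S_x} = 1$, and if $x \notin B$ then $\set{x} \cup S_x$ is a circuit of $N$: it is dependent, and each of its proper subsets is either contained in $B$ or of the form $\set{x} \cup (S_x \sm \set{b_0})$, hence independent since the $\F_2$-representation of $x$ over $B$ is unique. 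Thus $\abs{S_x} \le s - 1$ in all cases, so $\abs{N}$ is at most the number of subsets of $B$ of size at most $s-1$. Taking $s$ to be the size of a largest circuit, this shows that every nonempty Eulerian binary matroid $N$ (which necessarily has a circuit) contains one of size at least $\log\abs{N} / \log(\rank(N) + 1)$.

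Now set $m = \abs{M}$ and $r = \rank(M)$, and assume $m$ is large, as the statement is asymptotic. Starting from $M_0 = M$, I would repeatedly remove from $M_j$ a circuit $C_j$ of size at least $\log\abs{M_j}/\log(\rank(M_j)+1) \ge \log\abs{M_j}/\log(r+1)$, where the inequality uses $\rank(M_j) \le r$, and set $M_{j+1} = M_j \sm C_j$. Each $M_j$ is again Eulerian, and since every circuit has at least $3$ elements the process terminates after $T$ steps with $\abs{M_j}$ strictly decreasing to $0$; hence $c(M) \le T$. To bound $T$, split the steps into those with $\abs{M_j} > m / \log^2 m$ and those with $\abs{M_j} \le m / \log^2 m$. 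In the first group every removed circuit has more than $\log(m/\log^2 m)/\log(r+1) = (\log m - 2\log\log m)/\log(r+1)$ elements, and the circuits are pairwise disjoint subsets of $M$, so there are fewer than $m\log(r+1)/(\log m - 2\log\log m)$ of them. In the second group every circuit has at least $3$ elements and their union (the first $M_j$ to drop below the threshold) has at most $m/\log^2 m$ elements, so there are at most $m/(3\log^2 m)$ of them.

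It then remains to verify that $\frac{m\log(r+1)}{\log m - 2\log\log m} + \frac{m}{3\log^2 m} = (1 + o(1))\frac{m\log r}{\log m}$ as $m \to \infty$. The second summand equals $\frac{m\log r}{\log m} \cdot \frac{1}{3\log m\log r}$, which is $o\l(\frac{m\log r}{\log m}\r)$. For the first summand, $\frac{\log m}{\log m - 2\log\log m} \to 1$, and because $\abs{M} \le 2^r - 1$ we have $r \ge \log_2(m+1)$, so $r \to \infty$ and $\frac{\log(r+1)}{\log r} \to 1$ uniformly over all admissible $r$; hence the first summand is $(1+o(1))\frac{m\log r}{\log m}$. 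Combining the two groups yields the theorem.

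I expect the genuinely new ingredient to be the lemma; with it in hand, the remainder is a routine greedy argument. The one place that needs care is the final estimate: to obtain the sharp leading constant $1 + o(1)$ rather than a larger absolute constant, one must cut off the ``large-circuit'' phase at a slowly-growing threshold such as $m/\log^2 m$ (so that $\log\abs{M_j} = (1 - o(1))\log m$ throughout that phase), and one must invoke $r \ge \log_2(m+1)$ to know that $\log(r+1)/\log r \to 1$ uniformly as $m \to \infty$.
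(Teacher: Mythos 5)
Your proposal is correct and follows essentially the same route as the paper: the same basis-representation lemma bounding $\abs{N}$ by $\sum_{i}\binom{\rank(N)}{i}$ when all circuits are small, followed by the same greedy extraction with the cutoff $\abs{M}/\log^2\abs{M}$. The only cosmetic difference is that you carry $\log(\rank+1)$ instead of $\log(\rank)$ and then use $\rank(M) \ge \log_2(\abs{M}+1) \to \infty$ to absorb the discrepancy into the $o(1)$, whereas the paper bounds $\sum_{i=1}^{c-1}\binom{r}{i} < r^c$ directly to get $\log r$ from the start.
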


This bound is the correct order of magnitude for certain sparse binary matroids. For instance, if $M$ consists of $k$ independent copies of $\F_2^2 \sm \set{0}$, then $c(M) = k$ and
\begin{equation*}
    \frac{\abs{M} \log(\rank(M))}{\log \abs{M}} = \frac{3k \log(2k)}{\log(3k)} = (3 + o(1)) k.
\end{equation*}

In addition to circuit decompositions, we will also consider circuit odd-covers. For a graph $G$, an odd-cover is a collection of graphs on the same vertex set that covers each edge of $G$ an odd number of times and each non-edge of $G$ an even number of times. As mentioned above, every $n$-vertex Eulerian graph has an odd-cover with $\floor{(n-1)/2}$ cycles. More recently, Borgwardt, Buchanan, Culver, Frederickson, Rombach, and Yoo \cite{BBCFRY23} proved that every Eulerian graph of maximum degree $\Delta$ has an odd-cover with $\Delta$ cycles. Odd-covers were introduced by Babai and Frankl \cite{babai1988linear} and were also studied in \cite{buchanan2021subgraph} and \cite{buchanan2022odd}.

A \defn{circuit odd-cover} of a binary matroid $M$ is a collection of circuits $C_1, \dots, C_t \subs \F_2^n$ such that $C_1 \oplus \dots \oplus C_t = M$ where $A \oplus B$ denotes the symmetric difference of $A$ and $B$. In such an odd-cover, the elements of $M$ are covered an odd number of times while the elements of $\F_2^n \sm M$ are covered an even number of times. Note that, similar to the decomposition setting, the condition that $M$ is Eulerian is necessary and sufficient for the existence of a circuit odd-cover of $M$.

We denote by $c_2(M)$ the minimum number of circuits in a circuit odd-cover of $M$. Since every circuit decomposition is also a circuit odd-cover, we have $c_2(M) \le c(M)$. We can obtain the following natural lower bound for $c_2(M)$.

\begin{proposition}\label{prop:densitylowerboundoverq}
    For every Eulerian binary matroid $M \subs \F_2^n \sm \set{0}$ it holds that
    \begin{equation*}
        c_2(M) \ge \max_{N \subs M} \ceil*{\frac{\abs{N}}{\rank(N) + 1}}.
    \end{equation*}
\end{proposition}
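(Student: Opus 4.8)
The plan is to prove, for each fixed subset $N \subs M$, the bound $c_2(M) \ge \ceil{\abs{N}/(\rank(N)+1)}$, and then take the maximum over all $N$. So fix $N \subs M$, set $r = \rank(N)$, and let $W = \operatorname{span}(N) \subs \F_2^n$, a subspace of dimension $r$. Let $C_1, \dots, C_t$ be any circuit odd-cover of $M$, i.e.\ circuits in $\F_2^n \sm \set{0}$ with $C_1 \oplus \dots \oplus C_t = M$; it suffices to show $t \ge \abs{N}/(r+1)$, since $t$ is a non-negative integer. The idea is that restricting the odd-cover to the subspace $W$ produces an odd-cover of $M \cap W \sups N$ in which every part has size at most $r+1$.

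First I would check that $(C_1 \cap W) \oplus \dots \oplus (C_t \cap W) = M \cap W$. This is a parity computation: for $x \in W$ the conditions $x \in C_i$ and $x \in C_i \cap W$ coincide, so $x$ lies in an odd number of the sets $C_i \cap W$ precisely when it lies in an odd number of the $C_i$, that is, precisely when $x \in M$; and the left-hand side is contained in $W$ by construction. Since $N \subs M$ and $N \subs W$, this gives $N \subs M \cap W \subs \bigcup_{i=1}^{t} (C_i \cap W)$, and hence $\abs{N} \le \sum_{i=1}^{t} \abs{C_i \cap W}$.

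Second, I would bound each term $\abs{C_i \cap W}$ by $r+1$. If $C_i \nsubs W$, then $C_i \cap W$ is a proper subset of the circuit $C_i$, hence linearly independent (as observed in the introduction), and being contained in $W$ it has size at most $\dim W = r$. If instead $C_i \subs W$, then $C_i \cap W = C_i$ is a circuit all of whose elements lie in $W$, so deleting any one of them leaves a linearly independent subset of $W$ and $\abs{C_i} \le r+1$. In either case $\abs{C_i \cap W} \le r+1$, so combining with the previous paragraph yields $\abs{N} \le t(r+1)$, which rearranges to the required $t \ge \abs{N}/(r+1)$.

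I do not expect a genuine obstacle here; it is essentially a restriction-to-a-subspace trick. The only points needing a little care are the parity check that symmetric difference commutes with intersecting against the subspace $W$, and the case distinction on whether or not $C_i$ is contained in $W$ in the bound $\abs{C_i \cap W} \le \rank(N)+1$. It is also worth noting that the argument uses only that each $C_i$ is a circuit of the ambient matroid $\F_2^n \sm \set{0}$ (not that $C_i \subs M$), never requires the $C_i$ to be distinct, and handles the degenerate case $N = \emptyset$ trivially, giving $c_2(M) \ge 0$.
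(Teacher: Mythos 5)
Your proof is correct and is essentially the paper's argument: the key point in both is that a proper subset of a circuit is linearly independent, so each $C_i$ meets a rank-$r$ region in at most $r+1$ elements, while every element of $N$ is covered at least once. The paper intersects each $C_i$ directly with $N$ rather than with $W = \operatorname{span}(N)$, which makes your parity computation for $(C_1 \cap W) \oplus \dots \oplus (C_t \cap W)$ unnecessary, but this is only a cosmetic difference.
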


\begin{proof}
    Consider a circuit odd-cover $C_1, \dots, C_t$ of $M$. For every subset $N \subs M$, each $C_i$ intersects $N$ in at most $\rank(N)+1$ elements since every proper subset of $C_i$ is linearly independent. The elements of $N$ must each be covered by $C_1, \ldots, C_t$ an odd number of times, so in particular, they must each be covered at least once. This implies that $t \cdot (\rank(N)+1) \ge \abs{N}$.
\end{proof}

The lower bound given in \cref{prop:densitylowerboundoverq} is closely related to the \defn{arboricity} of $M$, denoted $a(M)$, which is the minimum $t$ such that $M$ can be expressed as the union (or equivalently, as the symmetric difference) of $t$ linearly independent sets. In the case of graphic matroids, a decomposition of the matroid into independent sets coincides with a decomposition of the edge set of a corresponding graph into forests, whence the name arboricity. A celebrated theorem of Edmonds \cite{edmonds1965minimum} asserts that
\begin{equation*}
    a(M) = \max_{\emptyset \neq N \subs M} \ceil*{\frac{\abs{N}}{\rank(N)}}.
\end{equation*}
Since $\abs{N} \le 2^{\rank(N)}$, we have by \cref{prop:densitylowerboundoverq} that
\begin{equation}\label{eq:arblowerboundoddcover}
    c(M) \ge c_2(M) \ge (1 + o(1)) a(M) \quad \text{ as } \quad a(M) \to \infty.
\end{equation}
For $c(M)$, we cannot hope to attain this lower bound. For instance, if $M \subs \F_2^n \sm \set{0}$ consists of $k$ independent copies of $\F_2^s \sm \set{0}$, then $c(M) \ge k$ but the arboricity of $M$ is only $a(M) = \ceil{(2^s-1)/s}$. However, for $c_2(M)$, we show that the lower bound is tight.

\begin{theorem}\label{thm:oddcoverasymptotic}
    For every Eulerian binary matroid $M \subs \F_2^n \sm \set{0}$ it holds that
    \begin{equation*}
        c_2(M) \le \frac 43 a(M) \qquad \text{ and } \qquad c_2(M) = (1 + o(1)) a(M) \quad \text{ as } \quad a(M) \to \infty.
    \end{equation*}
\end{theorem}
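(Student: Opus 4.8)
The plan is to take the lower bound for granted and supply two matching upper bounds, the clean one via a single "completion'' step and the asymptotic one via a cyclic gluing. The lower bound $c_2(M)\ge(1-o(1))a(M)$ is exactly \eqref{eq:arblowerboundoddcover}, so only upper bounds remain. Throughout, fix a decomposition $M=I_1\sqcup\dots\sqcup I_a$ into $a=a(M)$ linearly independent sets (Edmonds' theorem \cite{edmonds1965minimum}); I may assume all $I_j$ are nonempty and that $M$ spans $\F_2^n$, so $n=\rank(M)$. Put $w_j=\sum_{v\in I_j}v$; since the $I_j$ partition the Eulerian set $M$, we have $\sum_j w_j=0$.

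For $c_2(M)\le\tfrac43 a(M)$: for each part with $|I_j|\ge 2$ the set $C_j:=I_j\cup\{w_j\}$ is a circuit, since $w_j\ne 0$ and $w_j\notin I_j$ by independence, and deleting any element of $C_j$ leaves an independent set. Singleton parts are covered in pairs by triangles $\{v,u,v+u\}$, with a minor modification if their number is odd. This produces $B\le a$ circuits with $C_1\oplus\dots\oplus C_B=M\oplus W$, where $W\subs\F_2^n$ collects the correction vectors occurring an odd number of times, so $|W|\le B$; moreover $W$ is Eulerian, being a symmetric difference of $M$ with circuits. Now decompose $W$ into disjoint circuits: every circuit has at least $3$ elements and peeling one off leaves an Eulerian set of size $0$ or $\ge 3$, so inductively $W$ is a disjoint union of at most $\lfloor |W|/3\rfloor$ circuits. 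Hence $M$ is a symmetric difference of at most $B+\lfloor|W|/3\rfloor\le B+\lfloor B/3\rfloor\le\tfrac43 B\le\tfrac43 a(M)$ circuits.

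For $c_2(M)\le(1+o(1))a(M)$ I would glue the parts cyclically, avoiding the correction set entirely. Arrange $I_1,\dots,I_a$ around a cycle and, on the edge $\{j,j+1\}$, place a single connector vector $y_j\in\F_2^n$ shared by the would-be circuits through $I_j$ and $I_{j+1}$, subject to $y_{j-1}+y_j=w_j$ for all $j$ (indices mod $a$); since $\sum_j w_j=0$ this system is solvable, with solutions forming a one-parameter family $y_j=y_j^{(0)}+s$. Set $C_j:=I_j\cup\{y_{j-1},y_j\}$. Then each $C_j$ is Eulerian, and it is a genuine circuit precisely when $I_j\cup\{y_{j-1}\}$ is independent and $y_{j-1},y_j\notin I_j$ (so that $y_j$ is the sum of the independent set $I_j\cup\{y_{j-1}\}$); and because each connector lies in exactly two of the $C_j$, the connectors cancel and $C_1\oplus\dots\oplus C_a=M$ as long as every $y_j\notin M$. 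The number of bad parameters $s$ — those making some $C_j$ non-circuit or some $y_j$ hit $M$ — is at most $\sum_j 2^{|I_j|}+a|M|$, so a good $s$ exists, and $c_2(M)\le a(M)$, whenever this is below $2^n$. In general one first refines the decomposition so every part has size at most $k\approx\rank(M)-\log_2 a(M)$: a part of size $m$ splits into $\lceil m/k\rceil$ parts, so the number of parts grows by a factor $1+O(|M|/(k\,a(M)))=1+o(1)$, and the bad‑$s$ count drops below $2^n$, exactly when $|M|/a(M)=o(\rank(M)-\log_2 a(M))$.

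The main obstacle is precisely that last condition, which fails in the "arboricity‑tight'' dense regime, where $M$ is a positive‑density subset of $\F_2^n$ and $|M|/a(M)$ has order $\rank(M)$; the extreme case is the complete binary matroid $\F_2^n\sm\{0\}$, for which $\F_2^n\sm M=\{0\}$ leaves no room whatsoever for connectors. Here one instead needs a direct near‑optimal circuit decomposition of dense binary matroids into $(1+o(1))a(M)$ circuits — obtained, one expects, by the greedy/probabilistic and algebraic techniques behind \cref{thm:decompqrdivr,thm:decompcomplete} — and then a way to reconcile the two regimes, peeling off from an arbitrary Eulerian $M$ a dense piece treated by such a decomposition and a sparse remainder treated by the cyclic construction while keeping the total at $(1+o(1))a(M)$. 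Carrying out this reconciliation is the crux; the rest is the counting argument above plus routine bookkeeping for singletons and small cases.
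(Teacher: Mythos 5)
Your proof of the first bound, $c_2(M) \le \frac{4}{3} a(M)$, is essentially the paper's argument: complete each independent part $I_j$ to a circuit $I_j \cup \{w_j\}$, observe that the leftover symmetric difference is an Eulerian set of size at most the number of circuits used, and decompose it into circuits of size at least $3$. Your explicit handling of singleton parts is a point of care the paper glosses over, and the bookkeeping there is routine.

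The asymptotic bound $c_2(M) = (1+o(1))a(M)$ is where you have a genuine gap, and you correctly diagnose it yourself: the cyclic-gluing construction requires connector vectors living outside $M$, and in the dense, arboricity-tight regime (culminating in the complete binary matroid, where $\F_2^n \sm M = \{0\}$) there is no room for them, so the argument does not close; the ``reconciliation of regimes'' you defer to is the entire difficulty of that route. The missing idea is much simpler than a near-optimal circuit \emph{decomposition} of dense matroids: do not try to make the corrections cancel at all. Keep the first-stage circuits $C_j = I_j \cup \{w_j\}$ exactly as in your $4/3$ argument, so that $N := M \oplus C_1 \oplus \dots \oplus C_t$ is an Eulerian set contained in $\{w_1, \dots, w_t\}$, hence $\abs{N} \le t = a(M)$. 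Then odd-cover $N$ by iterating the same completion trick on $N$ itself: take a maximal independent subset $I$ of the current $N$ and replace $N$ by $N \oplus \l(I \cup \{\textstyle\sum_{y \in I} y\}\r)$. Since $\rank(N) \ge \log_2 \abs{N}$, each step shrinks $N$ by at least $\log_2 \abs{N} - 1$, so $N$ is a symmetric difference of $(1+o(1)) \abs{N} / \log_2 \abs{N} = o(t)$ circuits; this is exactly \cref{lem:symdifpreliminarybound} in the paper. This works at any density precisely because an odd-cover's circuits may use elements outside $M$, and the total count is $t + o(t) = (1+o(1)) a(M)$.
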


The rest of the paper is organized as follows. In \cref{sec:decompmatroids} we construct circuit decompositions for arbitrary binary matroids and prove \cref{thm:decompqrdivr,thm:decompmlogrdivroverq}. We then specialise to the complete binary matroid and provide a proof of \cref{thm:decompcomplete} in \cref{sec:decompcompletematroids}. \cref{thm:oddcoverasymptotic} is proven in \cref{sec:oddcovers}, and we conclude in \cref{sec:openproblems} with some open problems.

\section{Decomposing arbitrary binary matroids into circuits}
\label{sec:decompmatroids}

To decompose any binary matroid $M$ into circuits, our main method is to greedily remove the largest circuit in $M$ that we can find. The following lemma gives an implicit lower bound on the size of such a circuit.

\begin{lemma}\label{lem:circuitsizeoverq}
    Let $M \subs \F_2^n \sm \set{0}$ be a binary matroid and $c \ge 2$ be an integer. If $M$ contains no circuit of size larger than $c$, then
    \begin{equation*}
        \abs{M} \le \sum_{i = 1}^{c-1} \binom{\rank(M)}{i}.
    \end{equation*}
\end{lemma}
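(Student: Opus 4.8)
The plan is to argue by contradiction: suppose $M$ has no circuit of size larger than $c$ but $\abs{M} > \sum_{i=1}^{c-1} \binom{\rank(M)}{i}$, and derive a circuit that is too large. Write $r = \rank(M)$ and fix a basis $B = \{b_1, \dots, b_r\}$ of $M$ (a maximal linearly independent subset). Every element $x \in M$ has a unique expansion $x = \sum_{j \in S_x} b_j$ for some nonempty $S_x \subs [r]$; think of $S_x$ as the support of $x$ over the basis $B$. The idea is that the map $x \mapsto S_x$ encodes $M$ as a set of nonempty subsets of $[r]$, and circuits of $M$ correspond to minimal subfamilies whose symmetric difference (of supports) is empty.

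First I would observe that if two distinct elements $x, y \in M$ have $\abs{S_x} = \abs{S_y}$ with $S_x = S_y$ — impossible since the expansion is unique — so instead the key combinatorial fact to extract is: among the elements of $M$ whose support has size at most $c - 1$, if there are "too many," some short linear combination collapses. Concretely, the count $\sum_{i=1}^{c-1} \binom{r}{i}$ is exactly the number of nonempty subsets of $[r]$ of size at most $c-1$. So if $\abs{M} > \sum_{i=1}^{c-1}\binom{r}{i}$, then by pigeonhole $M$ must contain an element $z$ whose support $S_z$ has size at least $c$ — that is, $z = b_{j_1} + \dots + b_{j_k}$ with $k \ge c$, all $b_{j_\ell} \in M$. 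Then $\{z, b_{j_1}, \dots, b_{j_k}\}$ is an Eulerian subset of $M$ of size $k + 1 \ge c + 1$, and it contains a circuit. But any circuit inside a set of $k+1$ vectors that are in "general position" except for the single dependency $z = \sum b_{j_\ell}$ must be the whole set: removing any $b_{j_\ell}$ leaves $\{z\} \cup \{b_{j_m} : m \ne \ell\}$, which is linearly independent since $z$ together with $k-1$ of the $b$'s spans a $k$-dimensional space; similarly removing $z$ leaves independent vectors. Hence $\{z, b_{j_1}, \dots, b_{j_k}\}$ is itself a circuit of size $k + 1 \ge c + 1 > c$, contradicting the hypothesis.

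The step I expect to need the most care is establishing that every element $x \in M$ genuinely has a representation as a sum of basis elements that themselves lie in $M$ — a priori $B$ is a basis of the span of $M$, and the expansion $x = \sum_{j \in S_x} b_j$ uses vectors $b_j$ which we have chosen to lie in $M$, so this is fine, but one must phrase it correctly: pick $B \subs M$ a maximal independent subset, so $B \subs M$ by construction, and maximality forces every $x \in M$ to be a $\F_2$-linear combination of $B$, i.e.\ a sum of a nonempty subset of $B$ (nonempty since $x \ne 0$). The only remaining subtlety is the minimality/circuit argument in the last step — verifying that the specific Eulerian set $\{z\} \cup \{b_{j_\ell}\}$ is minimal — which reduces to the elementary linear-algebra check that deleting any one vector yields an independent set; this works because that set has a unique (up to scalar, but we are over $\F_2$) linear dependency, namely the all-ones one. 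That dichotomy — either $M$ lives inside the small-support subsets of $[r]$, giving the stated bound, or it contains a large circuit — is the whole proof.
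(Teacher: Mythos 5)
Your proof is correct and is essentially the paper's argument in contrapositive form: both fix a basis $B \subseteq M$, use the unique expansion of each element over $B$, and verify that an element together with the basis vectors in its support forms a circuit, so that the no-large-circuit hypothesis bounds each support size by $c-1$ and the count of nonempty subsets of $[r]$ of size at most $c-1$ gives the bound. The minimality check you flag as the delicate step is carried out in the paper exactly as you describe, via uniqueness of the expansion.
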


\begin{proof}
    Let $r = \rank(M)$ and let $B = \set{b_1, \dots, b_r} \subs M$ be a basis of $M$. For every $m \in M \sm B$, there exists a unique nonempty subset $I \subs [r]$ such that $m = \sum_{i \in I} b_i$.
    
    We claim that $C = \set{m} \cup \set{b_i: i \in I}$ is a circuit. Indeed, $m + \sum_{i \in I} b_i = 0$. Moreover, if $\emptyset \neq D \subs C$ with $\sum_{x \in D} x = 0$, it cannot hold that $D \subs B$ since $B$ is an independent set. Hence, $D = \set{m} \cup \set{b_j: j \in J}$ for some set $J \subs I$. This implies that $m$ is in the span of $\{b_j : j \in J\}$. So, by uniqueness of $I$, we must have $J = I$ and thus $D = C$. This shows that $C$ is a circuit.

    Because $M$ contains no circuit of size larger than $c$, we know that $\abs{I} + 1 = \abs{C} \le c$ and thus $\abs{I} \le c - 1$. As $m \notin B \cup \{0\}$, we also know that $\abs{I} \ge 2$. Moreover, the set $I$ entirely determines $m$. Therefore,
    \begin{equation*}
        \abs{M} = \abs{B} + \abs{M \sm B} \le r + \sum_{i = 2}^{c-1} \binom{r}{i} = \sum_{i = 1}^{c-1} \binom{r}{i}. \qedhere
    \end{equation*}
\end{proof}

If we now apply the greedy algorithm that always removes the largest circuit of $M$, whose size we lower bound by the preceding lemma, we can prove \cref{thm:decompmlogrdivroverq}.

\begin{proof}[Proof of \cref{thm:decompmlogrdivroverq}]
     We assume that $M$ is nonempty. Let $r = \rank(M) \ge 2$. We claim that if $N \subs M$ is Eulerian and nonempty, then $N$ contains a circuit of size at least $\log \abs{N} / \log r$. If not, this value would have to be larger than three since $N$ contains some circuit and every circuit has size at least three. But then $N$ would contain no circuit of size larger than $c = \floor{\log \abs{N} / \log r} \ge 3$ and so \cref{lem:circuitsizeoverq} would imply that
    \begin{align*}
        \abs{N} &\le \sum_{i = 1}^{c-1} \binom{\rank(N)}{i} \le \sum_{i = 1}^{c-1} \binom{r}{i} \le \sum_{i = 1}^{c-1} r^i \\
        &= \frac{r^c - r}{r - 1} < r^c \le \abs{N},
    \end{align*}
    giving a contradiction.

    To decompose $M$ into circuits, we start with $N = M$ and repeatedly remove a maximum circuit from $N$ until $N$ is empty. During this process, $N$ remains Eulerian. While $N$ satisfies $\abs{N} \ge \abs{M} / \log^2 \abs{M}$, we know from the discussion above that $N$ contains a circuit of size at least
    \begin{equation*}
        \frac{\log \abs{N}}{\log r} \ge \frac{\log \abs{M} - 2 \log \log \abs{M}}{\log r}.
    \end{equation*}
    Hence, after at most
    \begin{equation*}
        \frac{\abs{M}}{\frac{\log \abs{M} - 2 \log \log \abs{M}}{\log r}} = (1 + o(1)) \frac{\abs{M} \log r}{\log \abs{M}}
    \end{equation*}
    many steps, $N$ will satisfy $\abs{N} \le \abs{M} / \log^2 \abs{M}$. Note that
    \begin{equation*}
        \abs{N} \le \frac{\abs{M}}{\log^2 \abs{M}} \le \frac{2 \abs{M} \log r}{\log^2 \abs{M}} = o(1) \frac{\abs{M} \log r}{\log \abs{M}}.
    \end{equation*}
    Hence, by decomposing $N$ into at most $|N|/3$ circuits, we decompose $M$ into at most
    \begin{equation*}
        (1 + o(1)) \frac{\abs{M} \log r}{\log \abs{M}}
    \end{equation*}
    many circuits, as required.
\end{proof}

Next, we want to prove \cref{thm:decompqrdivr}. If $M$ has size $\cO(2^{\rank(M)} / \log(\rank(M)))$, \cref{thm:decompmlogrdivroverq} already tells us that $c(M) = \cO(2^{\rank(M)} / (\rank(M) + 1))$. Thus, it suffices to prove this bound if $M$ is very dense, meaning that its size is close to $2^{\rank(M)}$.

In this setting, we still want to use the greedy algorithm to decompose $M$ into circuits. However, the lower bound on the circuit size used in the preceding proof will no longer be sufficient. Instead, if $M$ is dense, we need to show that there are circuits of size $\Theta(\rank(M))$ to obtain the desired result. To this end, we use the following standard entropy bound on the sum of binomial coefficients. Here, we denote the binary entropy function by $H(\alpha) = -\alpha \log_2 \alpha - (1-\alpha) \log_2 (1-\alpha)$.

\begin{lemma}\label{lem:generalentropybound}
    Let $r$ be a positive integer. Then for any $\alpha \in [0,1/2]$ we have
    \begin{equation*}
        \sum_{i = 0}^{\floor{\alpha r}} \binom ri \le 2^{H(\alpha)r}.
    \end{equation*}
\end{lemma}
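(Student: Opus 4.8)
The plan is to prove the standard entropy bound $\sum_{i=0}^{\floor{\alpha r}} \binom{r}{i} \le 2^{H(\alpha)r}$ by a direct probabilistic/weighting argument. First I would dispose of the degenerate cases: if $\alpha = 0$ the left side is $1 = 2^0$, and if $\alpha = 1/2$ then $2^{H(1/2)r} = 2^r \ge \sum_{i=0}^{r}\binom{r}{i}$, so we may assume $0 < \alpha < 1/2$. In this range, $H(\alpha)$ is well-defined and $\alpha/(1-\alpha) < 1$.

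The key step is to use the fact that $\alpha \le 1/2$ makes the map $i \mapsto (\alpha/(1-\alpha))^i$ nonincreasing on $\{0,1,\dots,r\}$, and in particular $(\alpha/(1-\alpha))^i \ge (\alpha/(1-\alpha))^{\floor{\alpha r}} \ge (\alpha/(1-\alpha))^{\alpha r}$ for every $i \le \floor{\alpha r}$. Therefore
\begin{equation*}
    \l(\frac{\alpha}{1-\alpha}\r)^{\alpha r} \sum_{i=0}^{\floor{\alpha r}} \binom{r}{i} \le \sum_{i=0}^{\floor{\alpha r}} \binom{r}{i} \l(\frac{\alpha}{1-\alpha}\r)^{i} \le \sum_{i=0}^{r} \binom{r}{i} \l(\frac{\alpha}{1-\alpha}\r)^{i} = \l(1 + \frac{\alpha}{1-\alpha}\r)^{r} = (1-\alpha)^{-r},
\end{equation*}
where the last equality is the binomial theorem. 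Rearranging gives $\sum_{i=0}^{\floor{\alpha r}} \binom{r}{i} \le (1-\alpha)^{-r} (\alpha/(1-\alpha))^{-\alpha r} = \alpha^{-\alpha r}(1-\alpha)^{-(1-\alpha)r}$.

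It remains to observe that $\alpha^{-\alpha r}(1-\alpha)^{-(1-\alpha)r} = 2^{H(\alpha)r}$, which is immediate from the definition $H(\alpha) = -\alpha\log_2\alpha - (1-\alpha)\log_2(1-\alpha)$ upon taking $\log_2$ of both sides. I do not anticipate any real obstacle here; the only mild subtlety is making sure the inequality $(\alpha/(1-\alpha))^i \ge (\alpha/(1-\alpha))^{\alpha r}$ is applied in the correct direction, which is exactly where the hypothesis $\alpha \le 1/2$ (equivalently $\alpha/(1-\alpha) \le 1$) is used — if $\alpha$ were larger than $1/2$ the exponential factor would point the wrong way and the argument would fail.
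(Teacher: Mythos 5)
Your proof is correct and is essentially the same argument as the paper's: both multiply the partial sum by the weight $(\alpha/(1-\alpha))^{\alpha r - i} \le 1$ termwise (using $\alpha \le 1/2$), extend to the full binomial sum, and evaluate it via the binomial theorem (equivalently, as a probability summing to $1$). The handling of the edge cases and the final identification with $2^{H(\alpha)r}$ are fine.
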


\begin{proof}
    Note that $\alpha \le 1-\alpha$ and therefore
    \begin{align*}
        \sum_{i = 0}^{\floor{\alpha r}} \binom{r}{i} & \le \sum_{i = 0}^{\floor{\alpha r}} \binom{r}{i} \l(\frac{1-\alpha}{\alpha}\r)^{\alpha r - i} \\
        & = \frac{1}{\alpha^{\alpha r} (1-\alpha)^{(1-\alpha) r}} \sum_{i = 0}^{\floor{\alpha r}} \binom{r}{i} (1-\alpha)^{r-i} \alpha^i \\
        & \le \frac{1}{\alpha^{\alpha r} (1-\alpha)^{(1-\alpha) r}} = 2^{H(\alpha) r}. \qedhere
    \end{align*}
\end{proof}

By combining this entropy bound with \cref{lem:circuitsizeoverq}, we can now prove that every dense binary matroid $M$ has circuits of size $\Theta(\rank(M))$ and can therefore be decomposed into $\cO(\abs{M} / (\rank(M) + 1))$ many circuits.

\begin{theorem}\label{thm:decompMdivroverq}
    For any $\eps > 0$, there exist $r_0 \in \N$ and $\delta > 0$ such that every Eulerian binary matroid $M \subs \F_2^n \sm \set{0}$ with $\rank(M) \ge r_0$ and $\abs{M} \ge 2^{(1 -\delta)\rank(M)}$ satisfies
    \[
        c(M) \le \left(2 + \eps\right)\frac{\abs{M}}{\rank(M) + 1}.
    \]
\end{theorem}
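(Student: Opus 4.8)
The plan is to run the same greedy "remove a largest circuit" algorithm as in the proof of Theorem~\ref{thm:decompmlogrdivroverq}, but with a stronger lower bound on the size of the largest circuit that we remove. Concretely, I claim that if $M$ is an Eulerian binary matroid with $\rank(M) = r$ large and $\abs{M} \ge 2^{(1-\delta)r}$, then $M$ contains a circuit of size at least $\alpha r$ for some constant $\alpha = \alpha(\delta) > 1/2$ (close to $1$ as $\delta \to 0$). Granting this, removing a largest circuit from any sub-Eulerian $N \subs M$ of size $\abs{N} \ge 2^{(1-\delta)r}$ removes at least $\alpha r$ elements; so after at most $\abs{M}/(\alpha r)$ steps we reduce to a set $N$ with $\abs{N} < 2^{(1-\delta)r}$, which is $o(\abs{M})$ if $\delta$ is chosen so that $2^{(1-\delta)r} = o(\abs{M}/r)$ — this is fine whenever $\abs{M}$ is genuinely close to $2^r$, and the low-density case $\abs{M} = O(2^{(1-\delta)r})$ is handled directly by Theorem~\ref{thm:decompmlogrdivroverq}. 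We then finish off this leftover $N$ with at most $\abs{N}/3$ further circuits, which is negligible. Tuning $\alpha$ close to $1/2$ from above (i.e.\ choosing $\delta$ small enough given $\eps$) yields $c(M) \le (2+\eps)\abs{M}/(r+1)$.

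The heart of the argument is the circuit-size bound, which follows from Lemma~\ref{lem:circuitsizeoverq} together with the entropy bound of Lemma~\ref{lem:generalentropybound}. Suppose for contradiction that $M$ has no circuit of size larger than $c = \lceil \alpha r \rceil$. Lemma~\ref{lem:circuitsizeoverq} gives
\begin{equation*}
    \abs{M} \le \sum_{i=1}^{c-1} \binom{\rank(M)}{i} \le \sum_{i=0}^{\floor{\alpha r}} \binom{r}{i} \le 2^{H(\alpha) r},
\end{equation*}
using Lemma~\ref{lem:generalentropybound} (valid since $\alpha \le 1/2$; to get $\alpha$ slightly above $1/2$ one can instead apply the entropy bound at $1/2$, or note $\sum_{i=0}^{\floor{\alpha r}}\binom ri \le 2^{r}$ and sharpen — see below). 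This contradicts $\abs{M} \ge 2^{(1-\delta)r}$ as soon as $H(\alpha) < 1 - \delta$. Since $H$ is continuous, increasing on $[0,1/2]$, and $H(1/2) = 1$, for every target $\alpha_0 < 1/2$ we may pick $\delta$ with $H(\alpha_0) < 1-\delta$; then $M$ has a circuit of size at least $\alpha_0 r$. Feeding $\alpha_0 \to 1/2$ into the greedy count above drives the constant from $2+\eps$ down toward $2$, which is exactly what the theorem asks for.

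One subtlety: to reach $\alpha$ \emph{strictly} above $1/2$ (so that the constant is genuinely below $2$) one would need a version of the entropy bound past the midpoint, but the theorem only claims the constant $2 + \eps$, so $\alpha$ approaching $1/2$ from below already suffices — for each $\eps > 0$, fix $\alpha_0 \in (0,1/2)$ with $2/(2\alpha_0/(\text{rounding})) \le 2 + \eps$ after accounting for the $\lceil \cdot \rceil$ and the $+1$ in the denominator, then choose $\delta = \delta(\alpha_0) > 0$ from $H(\alpha_0) < 1-\delta$ and $r_0$ large enough that all the asymptotic estimates (the $o(1)$ leftover term, and $c \ge 3$ so Lemma~\ref{lem:circuitsizeoverq} applies) hold for $r \ge r_0$. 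The main obstacle is purely bookkeeping: making sure the "leftover" set $N$ with $\abs{N} < 2^{(1-\delta)r}$ really is $o\bigl(\abs{M}/r\bigr)$ in the dense regime, and gluing this case cleanly to the sparse regime covered by Theorem~\ref{thm:decompmlogrdivroverq} so that the two cases together cover all $M$ with $\abs{M} \ge 2^{(1-\delta)r}$ — there is no deep difficulty, just a careful choice of the thresholds $\delta$, $r_0$ in terms of $\eps$.
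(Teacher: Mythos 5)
Your proposal follows essentially the same route as the paper: combine \cref{lem:circuitsizeoverq} with the entropy bound of \cref{lem:generalentropybound} to show that any Eulerian $N$ of size at least $2^{(1-\delta)r}$ contains a circuit of size at least $\alpha r$ whenever $H(\alpha) < 1-\delta$, greedily strip such circuits, and finish the small leftover with $\abs{N}/3$ circuits. Two remarks. First, your opening claim that one gets $\alpha > 1/2$ ``close to $1$ as $\delta \to 0$'' is false (and you correctly retract it later): the entropy bound caps you at $\alpha < 1/2$, which is exactly why the constant is $2+\eps$ and not smaller. Second, the one piece of ``bookkeeping'' you leave open is actually the only place where a choice matters: if you stop the greedy phase at the same threshold $2^{(1-\delta)r}$ that appears in the density hypothesis, the leftover $N$ could be as large as $\abs{M}$ itself, and $\abs{N}/3$ is then not negligible. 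The paper decouples the two thresholds by setting $\delta = (1-H(\alpha))/2$, so the circuit guarantee holds down to size $2^{(1-2\delta)r} = 2^{H(\alpha)r}$ while the hypothesis only demands $\abs{M} \ge 2^{(1-\delta)r}$; the leftover then satisfies $\abs{N} \le 2^{-\delta r}\abs{M} = o(\abs{M}/(r+1))$. With that adjustment your argument is the paper's proof.
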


\begin{proof}
    Let $\alpha = 1 / (2 + \eps/2)$ and $\delta = (1 - H(\alpha)) / 2$. It is easily verified that $H(x)$ is strictly increasing on $[0,1/2]$ with $H(1/2) = 1$, and so we have $\delta > 0$. Let $M$ be an Eulerian binary matroid with $r = \rank(M)$ and $\abs{M} \ge 2^{(1 -\delta)r}$. By \cref{lem:generalentropybound},
    \[
        \sum_{i =1}^{\lfloor\alpha r\rfloor} \binom{r}{i} \le 2^{H(\alpha)r} = 2^{(1-2\delta)r} < \abs{M},
    \]
    so we know by \cref{lem:circuitsizeoverq} that $M$ contains a circuit of size at least $\alpha r$. We remove circuits of this size until the remaining Eulerian binary matroid $N$ has size $\abs{N} \le 2^{(1 - 2\delta)r}$. The number of circuits removed so far is at most $\abs{M} / (\alpha r)$, and $N$ can be decomposed into at most $\abs{N} / 3$ circuits. Now,
    \[
        \frac{\abs{N}}{3} \le \frac{2^{(1 - 2\delta)r}}{3} \le \frac{2^{-\delta r}\abs{M}}{3} = o\l(\frac{\abs{M}}{r+1}\r).
    \]
    Thus we have
    \begin{align*}
        c(M) &\le \frac{\abs{M}}{\alpha r} + o\left(\frac{\abs{M}}{r+1}\right) \\
        &= \left(\frac{r+1}{r}(2 + \eps/2) + o(1)\right)\frac{\abs{M}}{r+1} \\
        &\le \left(2 + \eps\right)\frac{\abs{M}}{r+1}
    \end{align*}
    for $r$ sufficiently large.
\end{proof}

In particular, if $M$ is dense, this result implies that $c(M)$ is within a factor of $2 + o(1)$ of the lower bound $\abs{M} / (\rank(M) + 1)$ from the introduction. \Cref{thm:decompqrdivr} is now an easy consequence.

\begin{proof}[Proof of \cref{thm:decompqrdivr}]
    Let $\delta$ and $r_0$ be as in \cref{thm:decompMdivroverq} with $\varepsilon = 1/2$. If $\rank(M) \ge r_0$ and $\abs{M} \ge 2^{(1-\delta)\rank(M)}$, the theorem implies $c(M) \le \cO(2^{\rank(M)} / (\rank(M) + 1))$. Otherwise, $M$ can be decomposed into at most $\abs{M} / 3$ circuits, and
    \begin{equation*}
        \frac{\abs{M}}{3} \le 2^{(1-\delta)\rank(M)} = \frac{2^{\rank(M)}}{2^{\delta \rank(M)}} = o\l(\frac{2^{\rank(M)}}{\rank(M) + 1}\r). \qedhere
    \end{equation*}
\end{proof}

\section{Decomposing complete binary matroids into circuits}
\label{sec:decompcompletematroids}

In this section we prove \cref{thm:decompcomplete}, so we decompose the complete binary matroid $M$ into circuits. We will construct the circuits of this decomposition as orbits under a particular group action on $M$. This special structure allows us to show that $M$ can be decomposed into exactly $\abs{M} / (\rank(M) + 1)$ many circuits, as required.

\begin{proof}[Proof of \cref{thm:decompcomplete}]
    For $i \in \Z_p$, we write $e_i \in \F_2^p$ for the $i$-th standard basis vector of $\F_2^p$. For $x \in \F_2^p$, we denote by $x_i = \inprod{x, e_i}$ the $i$-th coordinate of $x$. Define
    \begin{equation*}
        N = \set*{x \in \F_2^p \sm \set{0} \st \sum_{i \in \Z_p} x_i = 0}.
    \end{equation*}
    Note that $N$ is isomorphic to $M$ since $\rank(N) = p-1$ and $|N| = 2^{\rank(N)}-1$.

    Consider the following group action of $\Z_p$ on $N$ defined for $j \in \Z_p$ by the linear map
    \begin{equation*}
        \phi_j(x) = \sum_{i \in \Z_p} x_i e_{i+j} = (x_{p-j}, x_{p-j+1}, \dots, x_{p-1}, x_0, x_1, \dots, x_{p-j-1}).
    \end{equation*}
    We claim that the orbits in $N$ under this action are circuits of size $p$, which gives us a decomposition of $N$ into such circuits.

    Consider an orbit $O = \set{\phi_j(x) \st j \in \Z_p}$ for some $x \in N$. By definition of $N$, $\sum_{i \in \Z_p} x_i = 0$, and so because $p$ is odd we must have $x_i \neq x_{i+1}$ for some $i$. This implies that $\phi_1(x) \neq x$ and therefore $\abs{O} \ge 2$. Since $\abs{O}$ divides $p$ by the Orbit-Stabilizer Theorem and because $p$ is prime, it follows that $\abs{O} = p$.

    It remains to show that $O$ is a circuit. First, we show that since $2$ has multiplicative order $p-1$ in $\Z_p$, the polynomial $(t^p-1)/(t-1) = t^{p-1} + \dots + t + 1$ is irreducible over $\F_2$. Let $\omega \neq 1$ be a $p$-th root of unity over $\mathbb F_2$, and let $g(t)$ be its minimal polynomial over $\mathbb F_2$. Recall that $g(t)$ is irreducible over $\mathbb F_2$ and has the form $\prod_{i = 1}^k (t-\omega_i)$ over the splitting field $K$ of $\omega$, where $\omega_1, \ldots, \omega_k \in K$ are the distinct Galois conjugates of $\omega$ over $\mathbb F_2$. Repeated application of the Frobenius automorphism $\alpha \mapsto \alpha^2$ in $K$ gives $p-1$ distinct Galois conjugates of $\omega$, namely $\omega, \omega^2, \omega^{2^2}, \ldots, \omega^{2^{p-2}}$. Thus $\deg g(t) = k \ge p-1$, so $g(t) = (t^p-1)/(t-1)$ is irreducible over $\mathbb F_2$.
    
    Now, let $\phi = \sum_{i \in \Z_p} x_i \phi_i = \sum_{i \in \Z_p} x_i \phi_1^i$, where we identify $\mathbb Z_p$ with $\{0, \ldots, p-1\}$. Note that $O$ can be rewritten as
    \begin{equation*}
        O = \set*{\sum_{i \in \Z_p} x_i e_{i+j} \st j \in \Z_p} = \set*{\sum_{i \in \Z_p} x_i \phi_i(e_j) \st j \in \Z_p} = \set{\phi(e_j) \st j \in \Z_p}.
    \end{equation*}
    Therefore, a linear dependence in $O$ is of the form
    \begin{equation*}
        0 = \sum_{j \in \Z_p} \mu_j \phi(e_j) = \phi(y)
    \end{equation*}
    where $y = \sum_{j \in \Z_p} \mu_j e_j$. To show that $O$ is a circuit, it suffices to show that the kernel of $\phi$ is generated by $\sum_{i \in \Z_p} e_i$. Here, we follow the approach of \cite{metamorphy}. To this end, let $f(t) = \sum_{i \in \Z_p} x_i t^i$. Since $f(1) = \sum_{i \in \Z_p} x_i = 0$, we know that $t-1$ divides $f(t)$, and because $(t^p-1)/(t-1)$ is irreducible, we must have $\mathrm{gcd}(f(t), t^p-1) = t-1$. By properties of the gcd, there exist polynomials $h(t), u(t), v(t) \in \F_2[t]$ such that 
    \begin{align*}
        f(t) &= h(t)(t-1); \\
        t-1 &= u(t)f(t) +  v(t)(t^p-1).
    \end{align*}
    Note that $\phi_1^p = \phi_0$, which is the identity on $\F_2^p$, so we have 
    \begin{align*}
        f(\phi_1) &= h(\phi_1)(\phi_1 - \phi_0); \\
        \phi_1 - \phi_0 &= u(\phi_1)f(\phi_1).
    \end{align*}
    Thus $\phi = f(\phi_1)$ and $\phi_1-\phi_0$ have the same kernel, which is easily seen to be spanned by $\sum_{i \in \Z_p} e_i$. This proves that $O$ is a circuit, so we've successfully decomposed $N$ into circuits of size $p$.
\end{proof}

We note that the technical conditions of \cref{thm:decompcomplete} are necessary for our proof. For example, our method fails for $p=7$, where the orbit of $e_0 + e_1 + e_2 + e_4$ decomposes into two circuits. Perhaps a different construction could give the same bound, up to rounding, for arbitrary complete binary matroids.

\section{Odd-covers of binary matroids}
\label{sec:oddcovers}

In this section, we consider circuit odd-covers and prove \cref{thm:oddcoverasymptotic}. Again, as for circuit decompositions, our strategy will be to greedily find a large circuit $C$, but instead of removing $C$ from $M$, we instead replace $M$ by $M \oplus C$. This means that $C$ can also use elements outside of $M$ and it is only important that $M \oplus C$ is much smaller than $M$ so that the greedy algorithm finishes quickly. To find a suitable circuit $C$, we simply pick a maximal independent set of $M$ and complete it to a circuit. This leads to the following bound.

\begin{lemma}\label{lem:symdifpreliminarybound}
    For every Eulerian binary matroid $M \subs \F_2^n \sm \set{0}$ it holds that
    \[
        c_2(M) \le (1 + o(1))\frac{\abs{M}}{\log_2\abs{M}}.
    \]
\end{lemma}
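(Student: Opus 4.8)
The plan is to run the greedy symmetric-difference algorithm sketched before the lemma statement: maintain an Eulerian binary matroid $N$, initially $N = M$; at each step find a circuit $C \subs \F_2^n$ with $\abs{C}$ close to $\rank(N) + 1$, and replace $N$ by $N \oplus C$; repeat until $N$ is small enough to be handled by the trivial bound $c_2(N) \le \abs{N}$ (or even to be covered directly, since each remaining element of $N$ spans at least a $3$-element circuit with two other vectors of $\F_2^n$). The circuit $C$ is produced as in the paragraph preceding the lemma: pick a maximal linearly independent subset $B = \set{b_1, \dots, b_s} \subs N$, so $s = \rank(N)$, and let $b = b_1 + \dots + b_s$. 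If $b \notin B$ and $b \ne 0$ then $B \cup \set{b}$ is a circuit of size $\rank(N) + 1$ (the argument is the same as in \cref{lem:circuitsizeoverq}); the degenerate cases $b \in B$ or $b = 0$ can be patched by instead summing a different maximal independent set, or by summing $b_1, \dots, b_s$ together with one extra vector outside $\mathrm{span}(B)$ (impossible by maximality) — in fact one checks that for $s \ge 2$ one can always choose $B$ so that $b \notin B \cup \set{0}$, e.g. by swapping in a vector of the form $b_i + b_j$. Either way we obtain a circuit $C$ with $\abs{C} = \Omega(\rank(N))$, and in fact $\abs{C} = \rank(N) + 1$ can be arranged.

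The key accounting step is to track $\abs{N}$. Crucially, $\rank(N)$ never increases during the process (each new $N$ is a subset of $\mathrm{span}$ of the old one, since $C \subs \mathrm{span}(N)$), and while $N$ is still large — say $\abs{N} \ge \abs{M} / \log^2 \abs{M}$ — we have $\rank(N) \ge \log_2 \abs{N} \ge \log_2 \abs{M} - 2\log_2\log_2\abs{M}$, so each step removes (up to the $\oplus$) a circuit of size at least $(1 + o(1)) \log_2 \abs{M}$. But under symmetric difference, $\abs{N \oplus C} = \abs{N} - 2\abs{N \cap C} + \abs{C \sm N} \le \abs{N} - \abs{C} + 2\abs{C \sm N}$, so to make $\abs{N}$ actually decrease I should pick $C$ to overlap $N$ as much as possible. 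The clean way: since $B \subs N$ and $B$ has $\rank(N)$ elements, $C = B \cup \set{b}$ satisfies $\abs{C \cap N} \ge \rank(N)$, hence $\abs{C \sm N} \le 1$ and $\abs{N \oplus C} \le \abs{N} - \rank(N) + 1 \le \abs{N} - \rank(N)/2$ say, for $\rank(N)$ large. Thus after at most $\abs{M} / ((1+o(1))\log_2\abs{M}) = (1+o(1)) \abs{M}/\log_2\abs{M}$ steps, $\abs{N}$ drops below $\abs{M}/\log^2\abs{M}$; then finish with $\le \abs{N}/3 \le \abs{M}/(3\log^2\abs{M}) = o(\abs{M}/\log_2\abs{M})$ further circuits, giving $c_2(M) \le (1+o(1))\abs{M}/\log_2\abs{M}$ as claimed.

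The main obstacle is the degenerate-case bookkeeping around forming the circuit $C$: ensuring that the completion $b$ of a maximal independent set really lands outside $B \cup \set{0}$, and doing so while keeping $\abs{C \cap N}$ within $1$ of $\abs{C}$ so that $\abs{N}$ genuinely shrinks by $\approx \rank(N)$ per step rather than merely changing. A secondary point to get right is the transition regime: once $\rank(N)$ is no longer $\Omega(\log\abs{M})$ — which can only happen after $\abs{N}$ has already become polynomially small in $\abs{M}$ — we stop the greedy phase and absorb the remainder into the $o(\cdot)$ term, so no uniform lower bound on circuit size is needed throughout. Everything else is routine estimation with $\log$'s, and the irreducibility/algebraic machinery of the previous section is not needed here.
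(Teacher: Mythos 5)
Your proposal is correct and follows essentially the same route as the paper: greedily replace $N$ by $N \oplus C$, where $C$ is a maximal linearly independent subset of $N$ completed by its sum, run this phase while $\abs{N} \ge \abs{M}/\log^2\abs{M}$, and finish by decomposing the small remainder into at most $\abs{N}/3$ circuits. The degenerate cases you spend time patching (the completion $b$ landing in $B \cup \set{0}$) never actually occur once $\rank(N) \ge 2$, since either event would force a linear dependence inside the independent set $B$, so the paper simply does not address them.
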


\begin{proof}
    We start with $N = M$ and repeatedly replace $N$ with $N \oplus C$ for some circuit $C \subs \F_2^n \setminus \{0\}$. This retains that $N$ is Eulerian. We obtain $C$ by taking a maximal linearly independent subset $I$ of $N$ and completing it to a circuit $C := I \cup \{x\}$ where $x = \sum_{y \in I} y$. Since $\rank(N) \ge \log_2\abs{N}$, we reduce the size of $N$ by at least $\log_2\abs{N} - 1$ at every step. As long as $\abs{N} \ge \abs{M} / \log^2 \abs{M}$, we have 
    \[
        \log_2 \abs{N} - 1 \ge \log_2 \abs{M} - 2 \log_2 \log \abs{M} - 1.
    \]
    Once $\abs{N} < \abs{M} / \log^2 \abs{M}$, we can decompose $N$ into at most $\abs{N} / 3$ circuits of size at least $3$. In total, the number of circuits used is at most
    \[
        \frac{\abs{M}}{\log_2\abs{M} - 2\log_2 \log \abs{M} - 1} + \frac{\abs{M}}{3\log^2\abs{M}} = (1 + o(1))\frac{\abs{M}}{\log_2 \abs{M}}. \qedhere
    \]
\end{proof}

We now establish the exact asymptotics of $c_2(M)$ in the regime where $a(M) \to \infty$.

\begin{proof}[Proof of \cref{thm:oddcoverasymptotic}]
    We have $c_2(M) \ge (1 + o(1))a(M)$ already from \eqref{eq:arblowerboundoddcover}. To prove the upper bounds, let $M = I_1 \cup \cdots \cup I_t$ be a decomposition of $M$ into $t = a(M)$ linearly independent sets. For each $I_i$, the set $C_i := I_i \cup \{x_i\}$ where $x_i = \sum_{y \in I_i} y$ is a circuit. Now, the matroid $N := M \oplus C_1 \oplus \cdots \oplus C_t \subs \{x_1, \ldots, x_t\}$ is Eulerian of size at most $t$, so $N$ can be decomposed into at most $t / 3$ circuits, implying that $c_2(M) \le (4/3) t$. But actually, by \cref{lem:symdifpreliminarybound}, $N$ is the symmetric difference of at most $(1 + o(1)) (t / \log_2 t)$ circuits, giving
    \[
        c_2(M) \le t + (1 + o(1))\frac{t}{\log_2 t} = (1 + o(1))a(M). \qedhere
    \]
\end{proof}

\section{Open problems}
\label{sec:openproblems}

We showed that for certain values of $n$, the complete $n$-dimensional binary matroid $M$ can be decomposed into exactly $(2^{\rank(M)} - 1) / ((\rank(M)+1))$ many circuits. We conjecture that this is an upper bound for the minimum size of a circuit decomposition of any Eulerian binary matroid.

\begin{conjecture}
    For every Eulerian binary matroid $M \subs \F_2^n \sm \set{0}$ it holds that
    \[
        c(M) \le \ceil*{\frac{2^{\rank(M)}-1}{\rank(M)+1}}.
    \]
\end{conjecture}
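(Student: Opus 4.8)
The plan is to dispatch the sparse range with the greedy algorithm of \cref{sec:decompmatroids}, reduce the remaining range to the complete binary matroid, and attack the latter by a construction generalising \cref{thm:decompcomplete}.

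If $\abs{M} \le 3(2^{r}-1)/(r+1)$, where $r = \rank(M)$, then even the trivial decomposition into circuits of size at least $3$ gives $c(M) \le \ceil*{\abs{M}/3} \le \ceil*{(2^{r}-1)/(r+1)}$, so there is nothing to prove; this already covers every $M$ with $\abs{M} = o(2^{r}/r)$, and in particular everything handled by \cref{thm:decompmlogrdivroverq}. Thus one may assume $r$ is large and $\abs{M} \ge 3\cdot 2^{r}/(r+1)$, so that $M$ occupies at least a $3/(r+1)$ fraction of $\F_2^{r}\sm\set{0}$. In this dense range the trivial bound must be improved by a factor of order $r$, which forces the decomposition to use circuits of average size $(1-o(1))(r+1)$; and near the top of the range, $\abs{M}=2^{r}-1$, the $(1-o(1))$ must become essentially exact, so the extremal sub-case is precisely the complete binary matroid $\F_2^{r}\sm\set{0}$.

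For the complete binary matroid I would look for a decomposition into $\ceil*{(2^{r}-1)/(r+1)}$ circuits built as orbits of a group action, as in \cref{thm:decompcomplete}, but without the primality hypothesis; the remark after that proof — the orbit of $e_0+e_1+e_2+e_4$ splitting into two circuits when $p=7$ — shows a naive cyclic action does not suffice, so one needs a more flexible group, a careful choice of orbit representatives, or an entirely combinatorial resolvable-design-type construction. Granting the complete case, a near-complete $M$ should be reached by a transfer step: writing $\F_2^{r}\sm\set{0} = M \sqcup M_0$ with $M_0$ Eulerian and small, start from an optimal decomposition of $\F_2^{r}\sm\set{0}$, delete the (few) circuits meeting $M_0$, and re-decompose the resulting Eulerian remnant together with $M_0$ using \cref{thm:decompmlogrdivroverq,thm:decompMdivroverq}, checking that the deficit created by the deletions is $o(2^{r}/r)$ and hence does not breach the ceiling. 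An alternative to this transfer is a direct induction on $r$ via a hyperplane split $M = (M\cap H)\sqcup(M\sm H)$ — here $\abs{M\sm H}$ is automatically even, since $\sum_{x\in M\sm H}x = \sum_{x\in M\cap H}x \in H$ — peeling circuits that cross $H$ (two elements above $H$ together with a near-maximal independent subset of $M\cap H$ below) alongside circuits internal to each side.

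The main obstacle is that both halves of this programme are currently open. For the complete matroid, no construction is known beyond the arithmetic situation of \cref{thm:decompcomplete}, and the failure at $p=7$ suggests a real combinatorial difficulty in hitting the bound exactly rather than merely up to a $(1+o(1))$ factor. For the dense-but-not-complete range — equivalently, for the transfer step or for the crossing circuits in the hyperplane induction — one is forced to find linearly independent subsets of prescribed sum and near-maximal size inside a matroid that is only moderately dense, a genuine additive problem in $\F_2^{r}$ and precisely the matroid shadow of the obstruction in the Erd\H{o}s--Gallai cycle decomposition conjecture. I would expect progress to need either a clever absorption argument, reserving a structured reservoir of large circuits before the greedy phase, or new algebraic input on decompositions of $\F_2^{r}\sm\set{0}$.
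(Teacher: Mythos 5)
This statement is one of the paper's open conjectures, stated in \cref{sec:openproblems} without proof; the paper itself only establishes the much weaker \cref{thm:decompqrdivr} (a bound of $\cO(2^{\rank(M)}/(\rank(M)+1))$, with constant $2+\eps$ in the dense range via \cref{thm:decompMdivroverq}) and the exact value for complete binary matroids of the special dimensions in \cref{thm:decompcomplete}. Your proposal is therefore not a proof, and to your credit you say so explicitly: both pillars of your programme --- the complete binary matroid for general $n$, and the dense-but-incomplete range --- are exactly the content of the conjecture and remain open. The only part of your argument that is actually complete is the trivial observation that $c(M) \le \floor{\abs{M}/3}$ disposes of the case $\abs{M} \le 3(2^r-1)/(r+1)$, and the (correct) parity remark about hyperplane splits.

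Two concrete weaknesses beyond the admitted gaps. First, the passage from ``the decomposition must use circuits of average size $(1-o(1))(r+1)$'' to ``the extremal sub-case is precisely the complete binary matroid'' is a heuristic, not a reduction: a dense non-complete Eulerian matroid is not a sub-problem of the complete one in any formal sense, and nothing in the paper's machinery (which only guarantees circuits of size $\alpha r$ with $\alpha = 1/(2+\eps/2) < 1/2$ in \cref{thm:decompMdivroverq}) gets the average circuit size anywhere near $r+1$. Second, the proposed transfer step is checked against the wrong target. The conjectured bound $\ceil*{(2^{\rank(M)}-1)/(\rank(M)+1)}$ is exact, with no lower-order slack: if you start from a decomposition of $\F_2^r \sm \set{0}$ meeting the ceiling, delete $d$ circuits meeting $M_0$, and re-decompose the uncovered remnant (of size up to $d(r+1)$), you need the number of replacement circuits to be at most $d$, i.e.\ you need to re-decompose the remnant into circuits of essentially full average size $r+1$ --- which is the hard problem all over again. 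Verifying only that the extra cost is $o(2^r/r)$, as you propose, would still overshoot the ceiling. So the proposal is a reasonable map of the difficulties, but it contains no new idea that closes either gap.
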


In particular, we believe that the conclusion of \Cref{thm:decompcomplete} should hold, up to rounding, for any complete binary matroid, without the technical assumptions on $p$.

For odd-covers of binary matroids, we determined that $c_2(M) = (1 + o(1)) a(M)$ as $a(M) \to \infty$. There are numerous matroids with $c_2(M) \ge a(M)$. For example, if $M$ consists of two independent copies of $\F_2^s \sm \set{0}$, it is easy to see that any circuit covers at most $\rank(M)$ many elements of $M$ and so $c_2(M) \ge a(M)$. The difference between this and the lower bound from \cref{prop:densitylowerboundoverq} grows arbitrarily large as $s \to \infty$. However, we believe that there are no matroids which are worse than this, meaning that there should always be an odd-cover of size at most $a(M)$.

\begin{conjecture}
    For every Eulerian binary matroid $M \subs \F_2^n \sm \set{0}$ it holds that
    \[
        c_2(M) \le a(M).
    \]
\end{conjecture}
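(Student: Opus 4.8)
The plan is to push the argument behind \cref{thm:oddcoverasymptotic} to an exact bound. By Edmonds' theorem \cite{edmonds1965minimum}, fix a decomposition $M = I_1 \cup \dots \cup I_t$ into $t = a(M)$ pairwise disjoint linearly independent sets, each of size at least $2$ (the cases $a(M) \le 2$ are immediate, and for $a(M) \ge 3$ one has $\abs M \ge 2a(M)$, so such a balanced decomposition exists). Writing $x_i = \sum_{y \in I_i} y \neq 0$, each $C_i := I_i \cup \set{x_i}$ is a circuit and, exactly as in the excerpt, $C_1 \oplus \dots \oplus C_t = M \oplus X$, where $X$ is the set of vectors occurring an odd number of times among $x_1, \dots, x_t$. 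So it suffices to make this ``tail'' trivial: once $X = \emptyset$, the family $C_1, \dots, C_t$ is already a circuit odd-cover of $M$ using $t = a(M)$ circuits. Two kinds of freedom are available. First, we may re-choose the decomposition. Second, and crucially, a part $I_i$ whose span is not all of $\F_2^n$ has many completions to a circuit besides $I_i \cup \set{x_i}$: for any set $S$ disjoint from $I_i$ with $\sum_{s \in S} s = x_i$ and such that $S$ is minimally dependent modulo $\operatorname{span}(I_i)$, the set $I_i \cup S$ is a circuit whose tail $S$ we are free to shape (and the tail of the $i$-th circuit always sums to $x_i$, since $\sum_{s\in S}s = x_i$ is forced).

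The crux would then be a re-balancing lemma: one can choose the decomposition together with the individual completions so that, across all $t$ parts, every vector of $\F_2^n$ occurs an even number of times among the resulting tails. Note that $\sum_i x_i = 0$ automatically, since $X = M \oplus (C_1 \oplus \dots \oplus C_t)$ is Eulerian, so there is no obstruction at the level of sums, only at the level of set structure. The natural tool is local exchange: repeatedly take two parts, apply independent-set or basis exchanges that keep $M = \bigcup_i I_i$ unchanged, and re-shape the two tails so as to decrease a potential — say the number of distinct tail-vectors, or their total Hamming weight — until all tail-multiplicities are even. It seems natural to separate two regimes. In the dense regime, where $\abs M$ is close to $a(M)\rank(M)$ so that almost all parts must be bases (and a basis admits only the rigid completion $I_i \cup \set{x_i}$), one would try to force coincidences among the $x_i$ by $\F_2$-linear algebra; the key observation is that for disjoint $I_i, I_j$ one has $x_i = x_j$ exactly when $I_i \cup I_j$ is Eulerian, so the goal becomes pairing the parts into Eulerian unions. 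In the sparse regime one would recurse on $a(M)$: by \cref{prop:densitylowerboundoverq} and Edmonds' formula the correction matroid $X$ (of size at most $t$) has strictly smaller arboricity, and one absorbs it into the existing circuits using the many flexible completions of the low-rank parts.

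I expect the decisive obstacle to be exactly this residual parity. In \cref{thm:oddcoverasymptotic} the correction matroid is absorbed at a cost of an extra $(1+o(1))\,a(M)/\log_2 a(M)$ circuits (or $\frac13 a(M)$ if one does not recurse), and shrinking this overhead all the way to $0$ is not a routine optimisation. The sharpest difficulty occurs when $t = a(M)$ is odd and every part is forced to be a basis: then $X = \emptyset$ would require the tails $\set{x_1}, \dots, \set{x_t}$ to pair up, which is impossible, so one must either reorganise the decomposition so that some parts stop being bases, or reroute the one surplus vector inside a single circuit — and deciding when such a reroute is possible appears to need a genuinely new idea. A more modest target, reachable by the same method, is $c_2(M) \le a(M)$ when $a(M)$ is even (pair the parts) together with $c_2(M) \le a(M) + O(1)$ in general; this already improves the constant $\frac43$ in \cref{thm:oddcoverasymptotic} and should point to the right invariant for the full conjecture.
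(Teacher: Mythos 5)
This statement is one of the paper's open problems: it appears as a conjecture in \cref{sec:openproblems} and the paper offers no proof of it, so there is no paper proof to compare against --- only the partial results $c_2(M) \le \frac43 a(M)$ and $c_2(M) = (1+o(1))\,a(M)$ of \cref{thm:oddcoverasymptotic}. Your setup reproduces exactly the construction used there: take a decomposition $M = I_1 \cup \dots \cup I_t$ with $t = a(M)$, complete each part to a circuit $C_i = I_i \cup \set{x_i}$, and note that $C_1 \oplus \dots \oplus C_t = M \oplus X$ where $X$ is the odd-multiplicity part of the multiset $\set{x_1, \dots, x_t}$. Up to that point your claims are correct (including $\sum_i x_i = 0$ and the observation that $x_i = x_j$ precisely when $I_i \cup I_j$ is Eulerian), though the characterisation of which sets $S$ complete $I_i$ to a circuit is asserted rather than verified.

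The gap is that the entire content of the conjecture is concentrated in your unproved ``re-balancing lemma'', and the proposal makes no actual progress on it. The local-exchange scheme is named but not analysed: no potential function is shown to strictly decrease under an admissible exchange, no termination argument is given, and it is not established which exchanges are available while preserving both the partition into independent sets and the value $t = a(M)$. Moreover, you yourself exhibit a regime where the scheme as described provably cannot reach $X = \emptyset$: if every part must be a basis and $t$ is odd, each tail is a forced singleton and the multiset $\set{x_1, \dots, x_t}$ has odd total size, so it cannot have all multiplicities even; handling this requires circuits not of the form $I_i \cup S_i$ or a different decomposition, i.e.\ an idea not contained in the proposal. Even the ``modest target'' $c_2(M) \le a(M)$ for even $t$ is not established, since ``pair the parts'' requires arranging the decomposition so that the parts match up into Eulerian pairs $I_i \cup I_j$, and no argument is given that this is possible. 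In short, this is a reasonable description of why the problem is open, together with a re-derivation of the machinery behind \cref{thm:oddcoverasymptotic}, but it is not a proof of the conjecture.
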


\textbf{Acknowledgements.} We would like to thank Sang-il Oum, Alex Scott, David Wood, and Liana Yepremyan for organising the April 2023 \href{https://www.matrix-inst.org.au/events/structural-graph-theory-downunder-iii/}{MATRIX-IBS Structural Graph Theory Downunder III} workshop where part of this research was performed.

{
\fontsize{11pt}{12pt}
\selectfont

\hypersetup{linkcolor=structurecolor}
\newcommand{\etalchar}[1]{$^{#1}$}

}
\end{document}